\newtheorem{thm}{Theorem}
\newtheorem{defn}{Definition}
\newtheorem{cor}{Corollary}
\numberwithin{equation}{section} \setcounter{tocdepth}{1}
\begin{document}

\title[Gibbs measure]{Gibbs measures on spatial systems on vertices of Cayley trees}

\author{F. H. Haydarov}
 \address{F.H.Haydarov}

\address{New Uzbekistan University,
54, Mustaqillik Ave., Tashkent, 100007, Uzbekistan,}

\address{Central Asian University, 264, Milliy Bog street,  Tashkent 111221, Uzbekistan,}

 \address{Institute of mathematics,
9, University str. Tashkent, 100174, Uzbekistan,}

\address{National University of Uzbekistan,
 University str 4 Olmazor district, Tashkent, 100174, Uzbekistan.}
 \email {haydarov\_ imc@mail.ru.}

\begin{abstract}
There are many research works devoted to Gibbs measure for models on Cayley trees. Among these works, there are some works in which the general results are identical, but the considered models are various. In this article, we present the construction of Gibbs measures in the language of measure theory and reply to the question ``When can we construct Gibbs specifications?" Also, we present a new condition (convenient for verification) which is equivalent to the consistency condition of kernels. The obtained results in the article are general, not for a particular model. On the contrary, these results hold for some considered models on Cayley trees.
 \end{abstract}
\maketitle

{\bf Mathematics Subject Classifications (2010).} 60K35
(primary); 82B05, 82B20 (secondary)

{\bf{Key words.}} Cayley tree, spatial systems, kernels,  a version of the conditional expectation, Gibbsian specifications, Ising model.

\section{Introduction}

 The Gibbs measure is a probability measure that describes the equilibrium state of a physical system on the Cayley tree. In statistical mechanics, the Gibbs measure is used to describe the equilibrium properties of a system with many interacting particles. It assigns probabilities to different configurations of the system based on their energy. The most probable configurations are those with low energy, and the Gibbs measure gives a way to calculate these probabilities (e.g. \cite{2,g}).

Spin models on a graph or in a continuous spaces form a large
class of systems considered in statistical mechanics. Some of them
have a real physical meaning, others have been proposed as
suitably simplified models of more complicated systems. The
geometric structure of the graph or a physical space plays an
important role in such investigations. For example, in order to
study the phase transition problem on a cubic lattice $Z^d$ or in
space $\mathbb{d}$ one uses, essentially, the Pirogov-Sinai
theory; see \cite{PS1}, \cite{PS2} \cite{12}. A general methodology
of phase transitions in $\mathbb{Z}^d$ or $\mathbb{R}^d$ was
developed in \cite{7}; some recent results in this direction have
been established in \cite{MSS} (see also the
bibliography therein).

On a Cayley tree, the interactions between particles are often described by spin models, where each vertex of the tree is associated with a spin variable that can take on a certain number of values. The spins interact with their neighbors according to some interaction rule, which determines the energy of a given configuration (see \cite{9,10}).

There are many works devoted to several models defined on Cayley trees. But most of these are written by physicists with little regard for mathematical rigor. Minority of these papers which contain mathematically rigorous results about Gibbs measures on Cayley trees. This paper is mainly based on these mathematical papers. In this paper, we present the construction of Gibbsian specifications in the language of measure theory and reply to the question ``When can we construct Gibbs specifications?" Also, we present a new condition (convenient for verification) which is equivalent to the consistency condition of kernels. The obtained results in the article are general, not for a particular model. On the contrary, these results hold for some considered models on Cayley trees.

\section{Spatial systems on Cayley trees}

The Cayley tree $\Im^{k}=(V, L)$ of order $k \geq 1$ is an infinite tree, i.e. graph without cycles, each vertex of which has exactly $k+1$ edges. Here $V$ is the set of vertices of $\Im^{k}$ va $L$ is the set of its edges.


 Consider models where the spin takes values in the set $\Phi\subseteq \mathbb{R}_{\infty}^{+}$,
and is assigned to the vertices of the tree. Let $\Omega_A=\Phi^A$ be the set of all configurations
on $A$ and $\Omega:=\Phi^V$. A partial order $\preceq$ on $\Omega$ defined pointwise by stipulating that $\sigma_1 \preceq \sigma_2$ if and only if $\sigma_1(x) \leq \sigma_2(x)$ for all $x \in V$. Thus $(\Omega, \preceq)$ is a poset, and whenever we consider $\Omega$ as a poset then it will always be with respect to this partial order. The poset $\Omega$ is complete. If $\Omega_1$ is a non-empty subset of $\Omega$ then its least upper bound is the mapping given by $\sup (\Omega_1)(x)=\sup \{\sigma(x): \sigma \in \Omega_1\}$ for all $x \in V$.

We consider all elements of $V$ are numerated (in any order) by the numbers: $0,1,2,3,...$. Namely, we can write $V=\{x_0, x_1, x_2, ....\}$.

We denote by $\mathcal{N}$ the set of all finite subsets of $V$. For each $A\in V$ let $\pi_A: \Omega \rightarrow\Phi^{A}$ be given by $\pi_A\left(\sigma_x\right)_{x\in V}=\left(\sigma_x\right)_{x\in A}$ and let $\mathcal{C}_A=\pi_A^{-1}\left(\mathcal{P}\left(\Phi^{A}\right)\right)$. Let $\mathcal{C}=\bigcup_{A \in \mathcal{N}} \mathcal{C}_A$ and $\mathcal{F}$ is the smallest sigma field containing $\mathcal{C}$. 

For a fixed $x^{0} \in V$ we put
$$
W_{n}=\left\{x \in V \mid d\left(x, x^{0}\right)=n\right\}, \quad V_{n}=\bigcup_{m=0} ^{n} W_{m}, $$

where $d(x, y)$ is the distance between the vertices $x$ and $y$ on the Cayley tree, i.e. the number of edges of the shortest walk (i.e., path) connecting vertices $x$ and $y$.

For any fixed configuration $\sigma_{A}\in\Omega_{A}, \ A\subset V$ we denote:
$$\bar{\sigma}_{A}:=\left\{\sigma\in\Omega :\;
\sigma\big|_{A}=\sigma_A\right\}.$$

\begin{cor}\label{muhim}\cite{part1}
$\mathcal{F}=\mathcal{S}\left(\left\{\bar{\sigma}_{V_n}: n\in \mathbb{N}_{0}\right\}\right)$.
\end{cor}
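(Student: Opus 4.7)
The plan is to prove Corollary \ref{muhim} by double inclusion. The easy direction is $\mathcal{S}(\{\bar\sigma_{V_n}: n\in\mathbb{N}_0\}) \subseteq \mathcal{F}$. Each $V_n$ is a finite subset of $V$, so $V_n\in\mathcal{N}$, and the generator $\bar\sigma_{V_n}=\pi_{V_n}^{-1}(\{\sigma_{V_n}\})$ lies in $\mathcal{C}_{V_n}\subseteq\mathcal{C}\subseteq\mathcal{F}$. Because $\mathcal{F}$ is a sigma field containing all these generators, and $\mathcal{S}(\cdot)$ denotes the smallest such, the inclusion follows.

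For the reverse direction, set $\mathcal{G}:=\mathcal{S}(\{\bar\sigma_{V_n}: n\in\mathbb{N}_0\})$. Since $\mathcal{F}$ is the smallest sigma field containing $\mathcal{C}$, it is enough to check $\mathcal{C}\subseteq\mathcal{G}$, i.e.\ that every cylinder $C\in\mathcal{C}_A$ with $A\in\mathcal{N}$ belongs to $\mathcal{G}$. Here I would use the enumeration $V=\{x_0,x_1,\ldots\}$ (or simply the fact that $V=\bigcup_n V_n$ with $V_n\subseteq V_{n+1}$) to choose $n$ with $A\subseteq V_n$. Writing $C=\pi_A^{-1}(B)$ for some $B\subseteq\Phi^A$, I would then re-express $C$ as a cylinder over the larger finite set $V_n$, namely $C=\pi_{V_n}^{-1}(\widetilde B)$ where $\widetilde B=\{\eta\in\Phi^{V_n}:\eta|_A\in B\}$. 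This reduces the problem to showing that all cylinders based on $V_n$ lie in $\mathcal{G}$.

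At this stage I would decompose $C$ as the (possibly large) union $C=\bigcup_{\eta\in\widetilde B}\bar\eta_{V_n}$, each summand being one of the generators of $\mathcal{G}$. When $\Phi$ is countable (the standard setting for the models considered in the paper, such as Ising and Potts) $\widetilde B$ is countable and the union is a direct sigma-algebra operation, so $C\in\mathcal{G}$ and we are done. In the general case one observes that the family $\{\bar\eta_{V_n}:\eta\in\Phi^{V_n}\}$ is a measurable partition of $\Omega$, so for any $\widetilde B$ the set $C$ is measurable with respect to the pullback sigma field $\pi_{V_n}^{-1}(\mathcal{P}(\Phi^{V_n}))$, which is generated by these atoms and hence contained in $\mathcal{G}$.

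The main obstacle is precisely this last point: when $\Phi$ is uncountable, passing from the individual atoms $\bar\eta_{V_n}$ to an arbitrary union over $\widetilde B\subseteq\Phi^{V_n}$ is not a countable sigma-algebra operation, so a cleaner argument (via monotone classes or by exploiting that the generating cylinders form a partition and that any subcollection of a measurable partition generates the corresponding pullback) is needed. In the countable-spin setting typically adopted for Cayley tree models, this technical point disappears and the proof is essentially the inclusion $A\subseteq V_n$ together with the decomposition above.
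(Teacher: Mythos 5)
The paper does not actually prove this corollary; it is imported from \cite{part1}, so there is no in-text argument to compare against. That said, your double-inclusion argument is the natural one and is correct in the setting the paper really works in: the forward inclusion is immediate since each $\bar{\sigma}_{V_n}$ is a cylinder in $\mathcal{C}_{V_n}\subseteq\mathcal{C}\subseteq\mathcal{F}$, and for the reverse inclusion the reduction of a cylinder over $A\in\mathcal{N}$ to a cylinder over some $V_n\supseteq A$ (using $V=\bigcup_n V_n$ with $V_n\subseteq V_{n+1}$), followed by the decomposition $\pi_{V_n}^{-1}(\widetilde{B})=\bigcup_{\eta\in\widetilde{B}}\bar{\eta}_{V_n}$, is exactly what is needed; this union is a legitimate $\sigma$-algebra operation whenever $\Phi$ (hence $\Phi^{V_n}$) is countable.

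Be careful, though, with your closing remarks about the general case. Since the paper defines $\mathcal{C}_A=\pi_A^{-1}\left(\mathcal{P}\left(\Phi^A\right)\right)$ with the \emph{full power set} as base, your claim that the pullback $\pi_{V_n}^{-1}\left(\mathcal{P}\left(\Phi^{V_n}\right)\right)$ ``is generated by these atoms'' is false when $\Phi^{V_n}$ is uncountable: the $\sigma$-field generated by the singletons of an uncountable set is the countable--cocountable $\sigma$-field, which is strictly smaller than the power set. So this is not a technical wrinkle that a monotone-class argument or a cleverer use of the partition structure can repair; with this definition of $\mathcal{C}_A$ the corollary is simply false for uncountable $\Phi$. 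The countability of the spin space (which the paper gestures at when it says $\Phi$ is chosen so that the relevant structures are countably generated, and which certainly holds in the Ising and Potts applications where $\Phi=\{-1,1\}$) is a genuine hypothesis, not a convenience. With that hypothesis stated explicitly, your proof is complete.
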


A measurable space $(\Omega, \mathcal{F})$, a non-empty index set $V$ equipped with a partial order $\subseteq$, and a decreasing family $\mathbb{F}=\left\{\mathcal{F}^\Lambda\right\}_{\Lambda \in \mathcal{N}}$, (where $\mathcal{F}^\Lambda:=\mathcal{S}(\mathcal{C}_\Lambda)$) of sub $\sigma$-fields of $\mathcal{F}$. It is easy to check that the poset $(\mathcal{N}, \subseteq)$ is directed (i.e., for all $A_1, A_2 \in D$ there exists $A \in D$ with $A_1 \preceq A$ and $A_2 \preceq A$ ) and countably generated (i.e., there exists a countable subset $D_0$ of $D$ such that for each $A \in D$ there is an element $A_0 \in D_0$ with $A \preceq A_0$ ).

A collection $\Sigma:=\left(V, \mathcal{N}, \Omega,\left\{\mathcal{F}^{\Lambda}\right\}_{\Lambda \in \mathcal{N}}\right)$ with $(V, \mathcal{N}),$ $\Omega$ and $\left\{\mathcal{F}^{\Lambda}\right\}_{\Lambda \in \mathcal{N}}$ as above and with the inclusion order $\subseteq$ on $\mathcal{N}$ countably generated will be called a \emph{\textbf{spatial system}}. For instance, the lattice and particle models are both spatial systems. Note that we choose the set of spin values $\Phi$ which is $\mathcal{N}$ is countably generated (see in \cite{part1}).

For each $\Lambda \in \mathcal{N}$ let $\mu_{\Lambda}$ be a probability measure on $\mathcal{F}^{\Lambda}$. Then the family of probability measures $\left\{\mu_{\Lambda}\right\}_{\Lambda \in \mathcal{N}}$ is said to be \emph{\textbf{consistent (compatible)}} if $\mu_{\Lambda}(F)=\mu_{\Delta}(F)$ for all $F \in \mathcal{F}^{\Lambda}$ whenever $\Lambda \subset \Delta$. We say that the family of $\sigma$-algebras $\left\{\mathcal{F}^{\Lambda}\right\}_{\Lambda \in \mathcal{N}}$ has the Kolmogorov extension property if for each consistent family $\left\{\mu_{\Lambda}\right\}_{\Lambda \in \mathcal{N}}$ there exists a probability measure $\mu$ on $\mathcal{F}$ such that $\mu(F)=\mu_{\Lambda}(F)$ for all $F \in \mathcal{F}^{\Lambda}, \ \Lambda \in \mathcal{N}$. If $\mu$ exists then by Carath\'{e}odory's extension theorem it is unique, since it is determined by the family $\left\{\mu_{\Lambda}\right\}_{\Lambda \in \mathcal{N}}$ on the algebra $\mathcal{C}$ and $\sigma(\mathcal{C})=\mathcal{F}$. Finally, we say that the spatial system $\Sigma$ has the Kolmogorov extension property if the family of $\sigma$-algebras $\left\{\mathcal{F}^{\Lambda}\right\}_{\Lambda \in \mathcal{N}}$ does. Let $\mathcal{N}_1$ be a subset of $\mathcal{N}$ which is also directed and countably generated (\textbf{cofinal} in \cite{g}).

 The following theorem gives us any consistent family of probability measures $\left\{\mu_{\Lambda}\right\}_{\Lambda \in \mathcal{N}_1}$ extends to a probability measure $\mu \in \mathrm{P}(\mathcal{F})$.
 \begin{thm}\label{thm2.1} Let $\mathcal{N}_1$ be a directed and for each $\Lambda \in \mathcal{N}_1$ let $\mu_{\Lambda} \in \mathrm{P}\left(\mathcal{F}^{\Lambda}\right)$. If $\left\{\mu_{\Lambda}\right\}_{\Lambda \in \mathcal{N}_1}$ is consistent then
 there exists a unique probability measure $\mu \in \mathrm{P}(\mathcal{F})$ such that $\mu(F)=\mu_{\Lambda}(F)$ for all $F \in \mathcal{F}^{\Lambda}, \Lambda \in \mathcal{N}_1$.
\end{thm}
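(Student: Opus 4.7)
The plan is to reduce the claim to the classical Kolmogorov extension theorem along a countable totally ordered cofinal chain in $\mathcal{N}_1$, and then use consistency once more to promote the agreement off the chain to every $\Lambda\in\mathcal{N}_1$.

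First, since $\mathcal{N}_1$ is countably generated, fix a countable witness $D_0=\{A_n\}_{n\ge 1}\subseteq \mathcal{N}_1$, and build an increasing chain by repeated appeal to directedness: pick $\Lambda_n\in\mathcal{N}_1$ with $\Lambda_{n-1}\cup A_1\cup\cdots\cup A_n\subseteq \Lambda_n$. The chain $\Lambda_1\subseteq \Lambda_2\subseteq\cdots$ is cofinal in $\mathcal{N}_1$: any $\Lambda\in\mathcal{N}_1$ sits below some $A_m$, hence below $\Lambda_m$. Consistency of the given family specialises to consistency of $\{\mu_{\Lambda_n}\}_n$ on the totally ordered chain of sub-$\sigma$-fields $\mathcal{F}^{\Lambda_1}\subseteq \mathcal{F}^{\Lambda_2}\subseteq\cdots$.

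Second, I apply the classical Kolmogorov extension theorem to this chain. Each $\mathcal{F}^{\Lambda_n}$ is the pull-back under $\pi_{\Lambda_n}$ of a $\sigma$-field on the finite-coordinate product $\Phi^{\Lambda_n}$; since $\Phi\subseteq \mathbb{R}_{\infty}^{+}$, each factor is a standard Borel (in fact Polish) space, which supplies exactly the structural ingredient the classical theorem needs for the countable-additivity step. This yields a unique $\mu\in\mathrm{P}\bigl(\sigma(\bigcup_n \mathcal{F}^{\Lambda_n})\bigr)$ with $\mu|_{\mathcal{F}^{\Lambda_n}}=\mu_{\Lambda_n}$ for every $n$. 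Cofinality of $\{\Lambda_n\}$ in $\mathcal{N}$ (combined with Corollary \ref{muhim}) identifies $\sigma\bigl(\bigcup_n \mathcal{F}^{\Lambda_n}\bigr)$ with $\mathcal{F}$, so $\mu$ lives on the right $\sigma$-field.

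Third, I promote the equality from $\{\Lambda_n\}$ to all of $\mathcal{N}_1$. For any $\Lambda\in\mathcal{N}_1$, cofinality provides some $n$ with $\Lambda\subseteq\Lambda_n$; then $\mathcal{F}^\Lambda\subseteq \mathcal{F}^{\Lambda_n}$, and consistency of the original family forces $\mu_{\Lambda_n}|_{\mathcal{F}^\Lambda}=\mu_\Lambda$, whence $\mu|_{\mathcal{F}^\Lambda}=\mu_\Lambda$. Uniqueness is then immediate from Carath\'eodory's extension theorem since $\mu$ is already determined on the algebra $\bigcup_n\mathcal{F}^{\Lambda_n}$ and this algebra generates $\mathcal{F}$. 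The only genuinely hard step is the countable additivity buried in the classical Kolmogorov theorem, namely continuity at $\emptyset$ of the finitely additive set function assembled from $\{\mu_{\Lambda_n}\}$ on the cylinder algebra. I would not redo this argument: the Polishness of $\Phi$ inherited from $\mathbb{R}_{\infty}^{+}$ makes the standard compactness-of-probability-measures proof go through verbatim, and everything else is bookkeeping with directedness and cofinality.
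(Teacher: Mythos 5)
Your proposal is correct and follows essentially the same route as the paper: build an increasing cofinal chain in $\mathcal{N}_1$ by directedness, apply the classical Kolmogorov extension theorem along that chain, identify the generated $\sigma$-field with $\mathcal{F}$, and conclude uniqueness via Carath\'eodory. Your version is slightly more careful in that it makes explicit the step promoting the agreement from the chain to arbitrary $\Lambda\in\mathcal{N}_1$ and names where the Polish structure of $\Phi$ is used, both of which the paper leaves implicit.
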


\begin{proof} Let $\mathcal{N}_1=\{\Lambda_1, \Lambda_2, ..., \Lambda_s, ...\}$, where $|\Lambda_i|<\infty$ for all $i\in\mathbb{N}$. Since $\mathcal{N}_1$ is directed, there exists $\Lambda'_i \in \mathcal{N}_1$ with $\Lambda'_{i-1} \subseteq \Lambda'_{i}$ and $\Lambda_i \subseteq \Lambda'_i$ where $\Lambda'_{1}=\Lambda_{1}$. Thus, this gives us  $\Lambda'_{i}\subseteq \Lambda'_{i+1}$ for all $i\in \mathbb{N}$. Let $\mathcal{N}'_1:=\{\Lambda'_1, \Lambda'_2, ..., \Lambda'_s, ...\}$ then for each $\Lambda \in \mathcal{N}$ there is an element $\Lambda'_q \in \mathcal{N}'_1$ with $\Lambda \subseteq \Lambda'_q$. Hence, by Kolmogorov extension theorem there exists a unique probability measure $\mu' \in \mathrm{P}(\mathcal{F}')$ such that $\mu'(F)=\mu'_{\Lambda'_i}(F)$ for all $F \in \mathcal{F}^{\Lambda'_i}, \Lambda'_i \in \mathcal{N}'_1$, where $\mathcal{F}'$ is a $\sigma$-field generated by $\bigcup_{i=1}^{\infty}\mathcal{F}^{\Lambda'_i}$. Since $\left\{\mathcal{F}^\Lambda\right\}_{\Lambda \in \mathcal{N}_1}$ is a decreasing family we get $\mathcal{F}=\mathcal{F}'$. By Carath\'{e}odory's extension theorem we get $\mu'=\mu$.
\end{proof}

\section{Gibbsian specifications on Cayley trees}

Note that $\Sigma=\left(V, \mathcal{N}, \Omega,\left\{\mathcal{F}^{\Lambda}\right\}_{\Lambda \in \mathcal{N}}\right)$ is a spatial system which defined in previous section.  $\mathcal{N}$ equipped with a directed, countably generated partial order $\subseteq$, and a decreasing family $\mathbb{F}=\left\{\mathcal{F}_\Lambda\right\}_{\Lambda \in \mathcal{N}}$ of sub-$\sigma$-algebras of $\mathcal{F}$.

Suppose for each $\Lambda \in \mathcal{N}$ we have a strict $\mathcal{F}_\Lambda$-measurable quasi-probability kernel $\zeta_\Lambda \in K(\mathcal{F}_\Lambda)$. Then the family $\mathcal{V}=\left\{\zeta_\Lambda\right\}_{\Lambda \in \mathcal{N}}$ will be called an $\mathbb{F}$-\emph{\textbf{specification}} if $\zeta_{\Delta}=\zeta_{\Delta} \zeta_{\Lambda}$ whenever $\Lambda, \Delta \in \mathcal{N}$ with $\Lambda \subseteq \Delta$.  Let $\mathcal{V}=\left\{\zeta_\Lambda\right\}_{\Lambda \in \mathcal{N}}$ be an $\mathbb{F}$-specification; then a probability measure $\mu \in \mathrm{P}(\mathcal{F})$ is called a \emph{\textbf{Gibbs state with specification $\mathcal{V}$}} if $\mu=\mu \zeta_\Lambda$ for each $\Lambda \in \mathcal{N}$. Note that this definition of Gibbs states originates from Dobrushin \cite{ch1, ch2, ch3, ch4}, and Lanford and Ruelle \cite{ch12, ch15}.

\begin{defn}\label{Definition 6.14.}  Let $P_\Lambda: \Omega \rightarrow \overline{\mathbb{R}}:=\mathbb{R}\cup\{-\infty, \infty\}$ be $\mathcal{F}_\Lambda$-measurable mapping for all $\Lambda \in \mathcal{N}$, then the collection $P=\left\{P_\Lambda\right\}_{\Lambda\in \mathcal{N}}$ is called \textbf{a potential}. Also, the following expression
\begin{equation}
H_{\Delta, P}(\sigma) \stackrel{\text { def }}{=} \sum_{\Delta \cap \Lambda \neq \varnothing, \Lambda \in \mathcal{N}} P_\Lambda(\sigma), \quad \forall \sigma \in \Omega.
\end{equation}
is called \textbf{Hamiltonian} $H$ associated to the potential $P$.
\end{defn}
Put
$$
r(P) \stackrel{\text { def }}{=} \inf \left\{R>0: P_\Lambda \equiv 0 \text { for all } \Lambda \text { with } \operatorname{diam}(\Lambda)>R\right\}.
$$
If $r(P)<\infty, P$ has finite range and $H_{\Delta ; P}$ is well defined. If $r(P)=\infty$, $P$ has infinite range and, for the Hamiltonian to be well defined, we will assume that $P$ is absolutely summable in the sense that
$$
\sum_{\substack{\Lambda \in \mathcal{N}, x\in \Lambda }}\left\|P_\Lambda\right\|_{\infty}<\infty, \quad \forall x\in V,
$$
(remember that $\left.\|f\|_{\infty} \stackrel{\text { def }}{=} \sup_\omega|f(\omega)|\right)$ which ensures that the interaction of a spin with the rest of the system is always bounded, and therefore that $\left\|H_{\Delta ; P}\right\|_{\infty}<\infty$.

Let $\mathcal{N}_1=\{V_n : n\in \mathbb{N}\}$ then we define the following Hamiltonian in the box $V_n, n\in\mathbb{N}$:
\begin{equation}\label{H}
H_n(\sigma)=\sum_{n} P_{V_n}(\sigma), \quad \forall \sigma \in \Omega.
\end{equation}

Let us define a specification $\zeta^{H}=\left\{\zeta_{V_n}^{H}\right\}_{n\in\mathbb{N}}$ (in short $\zeta_{V_n}^{H}:=\zeta_{n}^{H}$) such that $\zeta_{V_n}^{H}(\cdot \mid \omega)$ gives to each configuration $\tau_{V_n} \omega_{V_n}^c$ a probability proportional to the Boltzmann weight prescribed by equilibrium statistical mechanics (see e.g. \cite{2}):
\begin{equation}\label{teng1}
\zeta_{n}^{P}\left(\omega, \sigma_{n}\right) \stackrel{\text { def }}{=} \frac{1}{\mathbf{Z}_{n}^{\omega}} e^{-H_n\left(\sigma_{n} \omega_{\bar{V}_n}\right)},
\end{equation}
where we have written explicitly the dependence on $\omega_{\bar{V}_n}$, and $\mathbf{Z}_{n}^\omega$ is a partition function, i.e.,
$$
\mathbf{Z}_{n}^{\omega} \stackrel{\text { def }}{=} \sum_{\sigma_{n} \in \Omega_{V_n}} \exp \left(-H_n\left(\sigma_{n} \omega_{\bar{V}_n}\right)\right).
$$
\begin{thm}\label{6.15.} If $\mathbb{F}_1:=\left\{\zeta_{n}^{P}\right\}_{n\in\mathbb{N}}$ then $\mathbb{F}_1$ is a specification.\end{thm}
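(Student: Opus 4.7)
The plan is to verify the two conditions entering the definition of an $\mathbb{F}$-specification: each $\zeta_n^P$ is a strict $\mathcal{F}_{V_n}$-measurable probability kernel, and the cocycle relation $\zeta_m^P = \zeta_m^P\,\zeta_n^P$ holds whenever $V_n \subseteq V_m$ (equivalently $n \le m$). The kernel property is immediate from \eqref{teng1}: for fixed $\omega$, $\zeta_n^P(\omega,\cdot)$ is a probability measure on $\Omega_{V_n}$ by construction of $\mathbf{Z}_n^\omega$, and it is strictly positive because $H_n$ is finite and bounded (this is either automatic in the finite-range case or built into the absolute-summability hypothesis). The dependence of $\zeta_n^P(\omega,\sigma_n)$ on $\omega$ enters only through $\omega_{\bar{V}_n}$, which yields $\mathcal{F}_{V_n}$-measurability in the first argument.

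For the consistency, the heart of the matter is the following Hamiltonian splitting. Since $V_n \subseteq V_m$, every $\Lambda \in \mathcal{N}$ meeting $V_n$ also meets $V_m$, hence
$$
D_{m,n}(\sigma) \stackrel{\text{def}}{=} H_m(\sigma) - H_n(\sigma) = \sum_{\substack{\Lambda \cap V_m \neq \varnothing \\ \Lambda \cap V_n = \varnothing}} P_\Lambda(\sigma)
$$
contains no term depending on the spins inside $V_n$. Writing a configuration in $V_m$ as $\sigma_m = (\sigma_n,\tau)$ with $\tau \in \Omega_{V_m\setminus V_n}$, one obtains, for every boundary configuration $\omega_{\bar{V}_m}$,
$$
e^{-H_m(\sigma_n\tau\omega_{\bar{V}_m})} = e^{-D_{m,n}(\tau\omega_{\bar{V}_m})}\,e^{-H_n(\sigma_n\tau\omega_{\bar{V}_m})}, \qquad \mathbf{Z}_n^{\sigma_m\omega_{\bar{V}_m}} = \mathbf{Z}_n^{\tau\omega_{\bar{V}_m}},
$$
since the last partition function is a sum over the spins inside $V_n$ and therefore depends only on the exterior configuration $\tau\omega_{\bar{V}_m}$.

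With these two ingredients in hand I would expand, for an arbitrary bounded measurable $f$,
$$
(\zeta_m^P\,\zeta_n^P)(\omega,f) = \sum_{\sigma_n,\tau}\frac{e^{-H_m(\sigma_n\tau\omega_{\bar{V}_m})}}{\mathbf{Z}_m^\omega}\sum_{\xi_n}\frac{e^{-H_n(\xi_n\tau\omega_{\bar{V}_m})}}{\mathbf{Z}_n^{\tau\omega_{\bar{V}_m}}}f(\xi_n\tau\omega_{\bar{V}_m}),
$$
interchange the summations, and execute the inner sum over $\sigma_n$ using the splitting above: $\sum_{\sigma_n} e^{-H_m(\sigma_n\tau\omega_{\bar{V}_m})} = e^{-D_{m,n}(\tau\omega_{\bar{V}_m})}\,\mathbf{Z}_n^{\tau\omega_{\bar{V}_m}}$, which precisely cancels the denominator $\mathbf{Z}_n^{\tau\omega_{\bar{V}_m}}$. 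Recombining $e^{-D_{m,n}}e^{-H_n(\xi_n\tau\omega_{\bar{V}_m})} = e^{-H_m(\xi_n\tau\omega_{\bar{V}_m})}$ and renaming $\xi_n$ back to $\sigma_n$ yields $\zeta_m^P(\omega,f)$. The only delicate point is the bookkeeping that isolates the $\sigma_n$-independent factor in $H_m$; this is precisely the constraint $\Lambda \cap V_n = \varnothing$ in the definition of $D_{m,n}$, and once it is established the partition-function cancellation is automatic, giving $\zeta_m^P = \zeta_m^P\,\zeta_n^P$ for all $n \le m$ and completing the verification that $\mathbb{F}_1$ is a specification.
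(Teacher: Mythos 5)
Your proposal is correct and follows essentially the same route as the paper: decompose the outer configuration over $V_m$ into its $V_n$-part and its $V_m\setminus V_n$-part, observe that $H_m-H_n$ does not depend on the spins inside $V_n$, sum over the inner spins to produce $\mathbf{Z}_n^{\tau\omega_{\bar V_m}}$, and cancel it against the inner kernel's normalization. Your version is slightly more explicit in justifying the key step (isolating $D_{m,n}$ as a sum over $\Lambda$ with $\Lambda\cap V_n=\varnothing$, which the paper dismisses as ``easy to check'') and in verifying the kernel properties, but the argument is the same.
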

\begin{proof} For a fixed $m,n\in\mathbb{N}$ with $V_n \subset V_m \Subset V$, we show that $\zeta_{m}^{P} \zeta_{n}^{P}=\zeta_{m}^{P}$. Let $A\subset \mathcal{P}(\Omega_{V_n})$. At first, we consider the case $A=\{\sigma_{V_n}\}$, i.e.,
$$
\begin{aligned}
\zeta_{m} \zeta_{n}(\omega, \sigma_{V_n}) & =\sum_{\tau_{V_m}} \zeta_{m}\left(\omega, \tau_{V_m} \right) \zeta_{n}\left(\tau_{V_m} \omega_{V_m^c}, \sigma_{V_n}\right)=\sum_{\tau_{V_m}} \zeta_{m}\left(\omega, \tau_{V_m} \right) \zeta_{n}\left(\tau_{V_m \backslash V_n} \omega_{V_m^c}, \sigma_{V_n}\right).
\end{aligned}
$$
From the last equation, for any $A\subset \mathcal{P}(\Omega_{V_n})$ we obtain
$$
\begin{aligned}
\zeta_{m} \zeta_{n}(A \mid \omega)=\sum_{\tau_{V_m}} \sum_{\eta_{\Delta}} \mathbf{1}_A\left(\eta_{V_n} \tau_{V_m \backslash V_n} \omega_{V_m^c}\right) \zeta_{m}\left(\omega, \tau_{V_m}\right) \zeta_{n}\left(\tau_{V_m \backslash V_n} \omega_{V_m^c}, \eta_{V_n}\right).
\end{aligned}
$$

For any configuration $\tau_{V_m}$ which defined on $V_m$ we rewrite this configuration as a combination of $\tau_{V_n}^{\prime}$ on $V_n$ and $\tau_{V_m \backslash V_n}$ on $V_m \backslash V_n$, i.e. $\tau_{V_m}=\tau_{V_n}^{\prime} \vee\tau_{V_m \backslash V_n}^{\prime \prime}$. By (\ref{teng1}), RHS of the last equation can be rewritten as:
$$
\sum_{\tau_{V_m \backslash V_n}^{\prime \prime}} \sum_{\eta_{V_n}} \mathbf{1}_A\left(\eta_{V_n} \tau^{\prime \prime}_{V_m \backslash V_n} \omega_{V_m^c}\right) \frac{e^{-H_{V_n}\left(\eta_{V_n} \tau^{\prime \prime}_{V_m \backslash V_n} \omega_{V_m^c}\right)}}{\mathbf{Z}_{m}\left(\omega_{V_m^c}\right) \mathbf{Z}_{n}\left(\tau_{V_m \backslash V_n}^{\prime \prime} \omega_{V_m^c}\right)} \sum_{\tau_{V_n}^{\prime}} e^{-H_{V_m}\left(\tau_{V_n}^{\prime} \tau_{V_m \backslash V_n}^{\prime \prime} \omega_{V_m^c}\right)} .
$$
It's easy to check that the following expression
$$H_{m}\left(\tau_{V_n}^{\prime} \tau_{V_m \backslash V_n}^{\prime \prime} \omega_{V_m^c}\right)- H_{n}\left(\tau_{V_n}^{\prime} \tau_{V_m \backslash V_n}^{\prime \prime} \omega_{V_m^{\mathrm{c}}}\right)$$
does not depend on $\tau_{V_n}^{\prime}$. That's why we have
$$H_{m}\left(\tau_{V_n}^{\prime} \tau_{V_m \backslash V_n}^{\prime \prime} \omega_{V_m^c}\right)-H_{n}\left(\tau_{V_n}^{\prime} \tau_{V_m \backslash V_n}^{\prime \prime} \omega_{V_m^{\mathrm{c}}}\right)=H_{m}\left(\eta_{V_n} \tau_{V_m \backslash V_n^{\prime \prime}} \omega_{V_m^{\mathrm{c}}}\right)-H_{n}\left(\eta_{V_n} \tau_{V_m \backslash V_n^{\prime \prime}} \omega_{V_m^{\mathrm{c}}}\right),
$$
which gives
$$
\sum_{\tau_{V_n}^{\prime}} e^{-H_{m}\left(\tau_{V_n}^{\prime} \tau_{V_m \backslash V_n}^{\prime \prime} \omega_{V_m^c}\right)}=\mathbf{Z}_{n}\left(\tau^{\prime \prime}_{V_m \backslash V_n}\omega_{V_m^c}\right) e^{H_{m}\left(\eta_{V_n} \tau_{V_m \backslash V_n}^{\prime \prime}{ }^\omega{ }^c\right)} e^{-H_{n}\left(\eta_{V_n} \tau_{V_m \backslash V_n}^{\prime \prime} \omega_{V_m^c}\right)} .
$$
Inserting this in the above expression, and renaming $\eta_{V_n} \tau_{V_m \backslash V_n}^{\prime \prime} \equiv \eta_{V_m}^{\prime}$, we get
$$
\zeta_{m} \zeta_{n}(A \mid \omega)=\sum_{\eta_{V_m}^{\prime}} \mathbf{1}_A\left(\eta_{V_m}^{\prime} \omega_{V_m^c}\right) \frac{e^{-H_{m}\left(\eta_{V_m}^{\prime} \omega_{V_m c}\right)}}{\mathbf{Z}_{m}\left(\omega_{V_m^c}\right)}=\zeta_{m}(A \mid \omega).$$\end{proof}

Now we give more comfortable condition for special kernels which is equivalent to the condition of consistency.

We consider some models discussed in this section with the corresponding potentials on Cayley trees:
\begin{equation}\label{11}\bar{P}_B(\sigma)= \begin{cases}-\beta \rho(\sigma(x), \sigma(y)) & \text { if }  B=\langle x, y \rangle,  \\ 0 & \text { otherwise.}\end{cases}
\end{equation}
Let $h:(\Omega, \mathcal{P}(\Omega)) \rightarrow (\mathbb{R}, \mathcal{B}(\mathbb{R}))$ be a measurable mapping. We define a kernel:
\begin{equation}\label{2.6}\zeta_n^{\bar{P}}(h, \sigma_n)=\frac{1}{\mathbf{Z}^{h}_{n}}\exp\left\{ J\beta \sum_{V_n \cap \Lambda \neq \varnothing, \Lambda \in \mathcal{N}} \bar{P}_\Lambda(\sigma_n)+\sum_{x\in W_n}\rho(\sigma_n(x), h(x))\right\},\end{equation}
where
$$\mathbf{Z}^{h}_{n}=\sum_{\sigma_n\in \Omega_{V_n}}\exp\left\{ J\beta \sum_{V_n \cap \Lambda \neq \varnothing, \Lambda \in \mathcal{N}} \bar{P}_\Lambda(\sigma_n)+\sum_{x\in W_n}\rho(\sigma_n(x),h(x))\right\}.$$
It's easy to check that $\zeta_n^{\bar{P}}(\sigma_n, h)$ satisfies the conditions of kernel.

 Are the family of kernels $\{\zeta_n^{\bar{P}}(\sigma_n, h)\}_{n\in\mathbb{N}}$ specification or not?  For a fixed $m,n\in\mathbb{N}$ with $V_n \subset V_m \Subset V$, we show that $\zeta_{m}^{\bar{P}} \zeta_{n}^{\bar{P}}=\zeta_{m}^{\bar{P}}$. But it is easy to check the condition. From Corollary \ref{muhim} it is enough to check the condition $\zeta_{n+1}^{\bar{P}} \zeta_{n}^{\bar{P}}=\zeta_{n+1}^{\bar{P}}$. Also, the last equality is equivalent to the following equality:
 \begin{equation}\label{2.7} \sum_{\omega_n\in \{-1, 1\}^{W_n}}\zeta_n^{\bar{P}}(\sigma_{n-1}\omega_n, h)=\zeta_{n-1}^{\bar{P}}(\sigma_{n-1}, h).
 \end{equation}
 We show that more comfortable condition which is equivalent to (\ref{2.7}).

\begin{thm}\label{THEOREM 2.1.} Probability distributions $\zeta_n^{\bar{P}}(h, \sigma_n), n=1,2, \ldots$, in (\ref{2.6}) are compatible if and only if for any $x \in V$ the following equation holds:
\begin{equation}\label{2.8} \prod_{y \in S(x)} \frac{\sum_{u \in\{-1,1\}} \exp \left(J \beta \rho(1, u)+\rho(u, h_y)\right)}{\sum_{u \in\{-1,1\}} \exp \left(J \beta \rho(-1, u)+\rho(u, h_y) \right)}=\exp \{\rho(1, h_x)-\rho(-1, h_x)\}.\end{equation}
\end{thm}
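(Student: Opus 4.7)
The plan is to unfold the compatibility relation (\ref{2.7}) with the explicit form of $\zeta_n^{\bar P}$ from (\ref{2.6}) and reduce it by direct calculation to the local equation (\ref{2.8}). The key structural fact I will use is that a Cayley tree is boundary-decoupling: every $y\in W_n$ has a unique parent $x \in W_{n-1}$, so $W_n = \bigsqcup_{x\in W_{n-1}} S(x)$, and the only edges of $V_n$ that are not entirely inside $V_{n-1}$ are those between $W_{n-1}$ and $W_n$.

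First, I decompose the exponent of $\zeta_n^{\bar P}(\sigma_{n-1}\omega_n,h)$ into three groups: (i) the edges with both endpoints in $V_{n-1}$, which depend on $\sigma_{n-1}$ alone and which I collect into a term $\mathcal H_{n-1}(\sigma_{n-1})$; (ii) the cross-boundary edges $\langle x,y\rangle$ with $x\in W_{n-1}, y\in S(x)$, contributing $\sum_{x\in W_{n-1}}\sum_{y\in S(x)} J\beta\,\rho(\sigma_{n-1}(x),\omega_n(y))$; (iii) the boundary term $\sum_{y\in W_n}\rho(\omega_n(y),h(y))$. Summing over $\omega_n\in\{-1,1\}^{W_n}$ factorises along $W_n$ because each $y$ appears only in the local pair $\{J\beta\,\rho(\sigma_{n-1}(x),\omega_n(y)), \rho(\omega_n(y),h(y))\}$ for its unique parent $x$:
$$
\sum_{\omega_n}\zeta_n^{\bar P}(\sigma_{n-1}\omega_n,h)=\frac{e^{\mathcal H_{n-1}(\sigma_{n-1})}}{\mathbf Z_n^h}\prod_{x\in W_{n-1}}\prod_{y\in S(x)}\sum_{u\in\{-1,1\}}e^{J\beta\rho(\sigma_{n-1}(x),u)+\rho(u,h(y))}.
$$
Since $\zeta_{n-1}^{\bar P}(\sigma_{n-1},h)=\tfrac{1}{\mathbf Z_{n-1}^h}\exp\bigl\{\mathcal H_{n-1}(\sigma_{n-1})+\sum_{x\in W_{n-1}}\rho(\sigma_{n-1}(x),h(x))\bigr\}$, after cancelling $e^{\mathcal H_{n-1}(\sigma_{n-1})}$ relation (\ref{2.7}) becomes
$$
\frac{\mathbf Z_{n-1}^h}{\mathbf Z_n^h}\prod_{x\in W_{n-1}}\prod_{y\in S(x)}\sum_{u\in\{-1,1\}}e^{J\beta\rho(\sigma_{n-1}(x),u)+\rho(u,h(y))}=\prod_{x\in W_{n-1}}e^{\rho(\sigma_{n-1}(x),h(x))}.
$$

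For the necessity direction, I eliminate the $\sigma_{n-1}$-independent prefactor $\mathbf Z_{n-1}^h/\mathbf Z_n^h$ by choosing any $x^*\in W_{n-1}$ and taking the ratio of this identity at two configurations that agree off $x^*$ and take values $\pm 1$ at $x^*$. Every factor indexed by $x\in W_{n-1}\setminus\{x^*\}$, together with the partition-function ratio, cancels, and what remains is exactly (\ref{2.8}) at $x=x^*$. Since $n$ and $x^*$ are arbitrary and every vertex of $V$ belongs to some $W_{n-1}$, this gives (\ref{2.8}) at every $x\in V$. For sufficiency I reverse the argument: if (\ref{2.8}) holds at every $x\in W_{n-1}$, then the ratio of the left- and right-hand sides of the displayed equation above is independent of $\sigma_{n-1}$; this forces $\mathbf Z_{n-1}^h/\mathbf Z_n^h$ to equal that common value, and (\ref{2.7}) follows.

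The main obstacle I anticipate is the bookkeeping of the first step, i.e.\ pinning down the decomposition of the Hamiltonian so that the interior term $\mathcal H_{n-1}$ really coincides with the exponent inside $\zeta_{n-1}^{\bar P}$ (so it cancels cleanly) and no edge of $V_n$ is double-counted or omitted. Once this decomposition is in place, the argument is the standard single-spin-flip ratio trick familiar from nearest-neighbour models on Cayley trees.
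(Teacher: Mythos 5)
Your proposal is correct and follows essentially the same route as the paper: the same decomposition of the exponent into interior, cross-boundary, and boundary terms, the same factorisation of the sum over $\omega_n$ along $W_n$, the single-site $\pm1$ ratio to extract (\ref{2.8}) for necessity, and the normalisation of the kernels to fix the constant $\mathbf{Z}^{h}_{n-1}\mathbf{A}^{h}_{n-1}=\mathbf{Z}^{h}_{n}$ for sufficiency. Your explicit single-spin-flip formulation at a chosen $x^{*}\in W_{n-1}$ is just a cleaner statement of the paper's ``rewrite for $\sigma_{n-1}(x)=1$ and $\sigma_{n-1}(x)=-1$ and divide'' step.
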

\begin{proof} \textit{Necessity.} Suppose that (\ref{2.7}) holds; we want to prove (\ref{2.8}). Substituting (\ref{2.6}) in (\ref{2.7}), obtain that for any configurations $\sigma_{n-1}: x \in V_{n-1} \mapsto \sigma_{n-1}(x) \in\{-1,1\}$ :

From (\ref{11}) we can rewrite (\ref{2.6}) as
\begin{equation}\label{1111}\zeta_n^{P}(h, \sigma_n)=\frac{1}{\mathbf{Z}^{h}_{n}}\exp\left\{J\beta \sum_{\langle x,y \rangle, x,y\in V_n} \rho(\sigma_n(x), \sigma_n(y))+\sum_{x\in W_n}\rho(\sigma_n(x), h(x))\right\},\end{equation}
\begin{equation}\label{2.9}
\begin{aligned}
& \frac{\mathbf{Z}^{h}_{n-1}}{\mathbf{Z}^{h}_n} \sum_{\omega_n \in \Omega_{W_n}} \exp \left(\sum_{x \in W_{n-1}} \sum_{y \in S(x)}\left(J \beta \rho(\sigma_{n-1}(x), \omega_n(y))+\rho(\omega_n(y), h_y)\right)\right)= \\
& \exp \left(\sum_{x \in W_{n-1}} \rho(\sigma_{n-1}(x), h_x)\right),
\end{aligned}
\end{equation}
where $\omega_n: x \in W_n \mapsto \omega_n(x)$.
From (\ref{2.9}) we get:
$$
\begin{gathered}
\frac{\mathbf{Z}^{h}_{n-1}}{\mathbf{Z}^{h}_n} \sum_{\omega_n \in \Omega_{W_n}} \prod_{x \in W_{n-1}} \prod_{y \in S(x)} \exp \left(J \beta \rho(\sigma_{n-1}(x), \omega_n(y))+\rho(\omega_n(y),h_y)\right)= \\
\prod_{x \in W_{n-1}} \exp \left( \rho(\sigma_{n-1}(x), h_x)\right) .
\end{gathered}
$$
Rewrite now the last equality for $\sigma_{n-1}(x)=1$ and $\sigma_{n-1}(x)=-1$, then dividing first of them to the second one we get
$$\prod_{y \in S(x)} \frac{\sum_{u \in\{-1,1\}} \exp \left(J \beta \rho(1, u)+\rho(u, h_y)\right)}{\sum_{u \in\{-1,1\}} \exp \left(J \beta \rho(-1, u)+\rho(u, h_y) \right)}=\exp \{\rho(1, h_x)-\rho(-1, h_x)\}.$$

\textit{Sufficiency.} Suppose that (\ref{2.8}) holds. It is equivalent to the representations
\begin{equation}\label{2.11}
\prod_{y \in S(x)} \sum_{u \in\{-1,1\}} \exp\left(J \beta \rho(t, u)+\rho(u, h_y)\right)=a(x) \exp \left(\rho(t, h_x)\right), t \in\{-1,1\}
\end{equation}
for some function $a(x)>0, x \in V$. We have
\begin{equation}\label{2.12}
\begin{gathered}
\text {LHS of} \ (\ref{2.7})=\frac{1}{\mathbf{Z}^{h}_n} \exp \left(J\beta \sum_{\langle x,y \rangle, x,y\in V_{n-1}} \rho(\sigma_{n-1}(x), \sigma_{n-1}(y))\right) \times \\
\prod_{x \in W_{n-1}} \prod_{y \in S(x)} \sum_{u \in\{-1,1\}} \exp \left(J \beta \rho(\sigma_{n-1}(x), u)+\rho(u, h_y)\right).
\end{gathered}
\end{equation}
Substituting (\ref{2.11}) into (\ref{2.12}) and denoting $\mathbf{A}^{h}_{n-1}=\prod_{x \in W_{n-1}} a(x)$, we get
\begin{equation}\label{2.13}
\begin{gathered}
\text {RHS of} \ (\ref{2.12})=\frac{\mathbf{A}^{h}_{n-1}}{\mathbf{Z}^{h}_n} \exp \left(J\beta \sum_{\langle x,y \rangle, x,y\in V_{n-1}} \rho(\sigma_{n-1}(x), \sigma_{n-1}(y))\right) \times \\ \prod_{x \in W_{n-1}} \exp \left(\rho(\sigma_{n-1}(x), h_x)\right).
\end{gathered}
\end{equation}
Since $\zeta_n^{\bar{P}}, n \geq 1$ is a probability, we should have
$$
\sum_{\sigma_{n-1} \in \Omega_{V_{n-1}}} \sum_{\omega_n \in \Omega_{W_n}} \zeta_n^{\bar{P}}\left(\sigma_{n-1}\omega_n, h\right)=1
$$
Hence from (\ref{2.13}) we get $\mathbf{Z}^{h}_{n-1} \mathbf{A}^{h}_{n-1}=\mathbf{Z}^{h}_n$, and (\ref{2.7}) holds.
\end{proof}

\textit{Application to Ising model.}   One of classic models in statistical mechanics is Ising model. We give above results for Ising model. Given a finite volume $\Lambda$ in $\Im^{k}$, for a prescribed boundary condition (b.c.) $\omega_{\Lambda^c} \in \Omega_{\Lambda^c}=$ $\{-1,1\}^{\Lambda^c}$, we define Hamiltonians on $\Omega$ for any $\sigma \in \Omega$ to be the uniformly convergent series
$$
H_{\Lambda}^\omega\left(\sigma_{\Lambda}\right)=-\sum_{\substack{x, y \in \Lambda \\ \langle x, y\rangle}} J \sigma_x \sigma_y-\sum_{\substack{x \in \partial(\Lambda), y \in V\setminus \Lambda \\
\langle x,y\rangle}} J \sigma_x \omega_y.
$$
For a fixed inverse temperature $\beta>0$, the Gibbs specification is determined by a family of probability kernels $\zeta=\left(\zeta_{\Lambda}\right)_{\Lambda \in \mathcal{S}}$ defined on $\Omega_{\Lambda} \times \mathcal{F}_{\Lambda^c}$ by the Boltzmann-Gibbs weights
\begin{equation}\label{xino}
\zeta_{\Lambda}(\sigma \mid \omega)=\frac{1}{Z_{\Lambda}^\omega} e^{-\beta H_{\Lambda}^\omega\left(\sigma_{\Lambda}\right)}
\end{equation}
where $Z_{\Lambda}^\omega=\sum_{\sigma \in \Omega_{\Lambda}} e^{-\beta H_{\Lambda}^\omega\left(\sigma_{\Lambda}\right)}$ is the partition function, related to free energy.

 Note that by Theorem \ref{6.15.} we can conclude that the family of kernels $\{\zeta_{\Lambda}(\sigma \mid \omega)\}_{\lambda\in \mathcal{N}}$ satisfies conditions of specification. If we take $\rho{\sigma, \omega}=\sigma\cdot \omega$ and $h_x=J\sum_{y\in S(x)}\omega(y)$ then from Theorem \ref{THEOREM 2.1.} the consistency condition will be equivalent to the following euqation:
$$
\prod_{y \in S(x)} \frac{\sum_{u \in\{-1,1\}} \exp \left(J \beta u+h_y u\right)}{\sum_{u \in\{-1,1\}} \exp \left(-J \beta u+h_y u\right)}=\exp \left(2 h_x\right).
$$
If we define new function
$$
f(h, \theta)=\frac{1}{2} \ln \frac{(1+\theta) e^{2 h}+(1-\theta)}{(1-\theta) e^{2 h}+(1+\theta)} .
$$
the the last equation can be rewritten as
$$
h_x=\sum_{y \in S(x)} f\left(h_y, \theta\right).
$$

Namely, we give well-known theorem for Ising models on Cayley trees (see e.g. \cite{10}).
\begin{cor} On Cayley trees the probability kernels $\{\zeta_{\Lambda}(\sigma \mid \omega)\}_{\lambda\in \mathcal{N}}$ for Ising model are consistent iff for any $x \in V$ the following equation holds:
$$
h_x=\sum_{y \in S(x)} f\left(h_y, \theta\right).
$$
\end{cor}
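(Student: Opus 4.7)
The plan is to derive the equation $h_x=\sum_{y\in S(x)}f(h_y,\theta)$ as a direct specialization of Theorem~\ref{THEOREM 2.1.}. First I would check that the Ising kernel $\zeta_\Lambda(\sigma\mid\omega)$ in \eqref{xino} is a special case of the kernel $\zeta_n^{\bar P}(h,\sigma_n)$ in \eqref{2.6}: take the interaction $\rho(\sigma,\omega)=\sigma\cdot\omega$, replace the boundary configuration $\omega$ by the associated boundary field $h_x=J\sum_{y\in S(x)}\omega(y)$ on $W_n$, and absorb the $J\beta$ factor into the potential $\bar P_B$ from \eqref{11}. Under this identification, the cofinal sequence $\mathcal{N}_1=\{V_n:n\in\mathbb{N}\}$ is the one already used to pass from consistency on all finite volumes to consistency along $V_n$, so Theorem~\ref{THEOREM 2.1.} immediately translates the consistency of $\{\zeta_\Lambda\}$ into the single equation
\[
\prod_{y\in S(x)}\frac{\sum_{u\in\{-1,1\}}\exp(J\beta u+h_y u)}{\sum_{u\in\{-1,1\}}\exp(-J\beta u+h_y u)}=\exp(2h_x).
\]

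Next I would reduce each factor on the left to a ratio of hyperbolic cosines. For each $y\in S(x)$ the numerator equals $2\cosh(J\beta+h_y)$ and the denominator equals $2\cosh(J\beta-h_y)$, so taking logarithms of both sides gives
\[
h_x=\frac{1}{2}\sum_{y\in S(x)}\ln\frac{\cosh(J\beta+h_y)}{\cosh(J\beta-h_y)}.
\]
It then suffices to show that each summand equals $f(h_y,\theta)$ for the natural parameter choice $\theta=\tanh(J\beta)$. I would expand $\cosh(J\beta\pm h_y)$ by the addition formula, divide through by $\cosh(J\beta)\cosh(h_y)$, and obtain
\[
\frac{\cosh(J\beta+h_y)}{\cosh(J\beta-h_y)}=\frac{1+\theta\tanh h_y}{1-\theta\tanh h_y},
\]
and then use $\tanh h_y=(e^{2h_y}-1)/(e^{2h_y}+1)$ to bring this into exactly the form $\bigl((1+\theta)e^{2h_y}+(1-\theta)\bigr)/\bigl((1-\theta)e^{2h_y}+(1+\theta)\bigr)$ that appears inside the definition of $f$.

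Putting the two halves together, the consistency equation furnished by Theorem~\ref{THEOREM 2.1.} becomes $h_x=\sum_{y\in S(x)}f(h_y,\theta)$, and conversely any collection $\{h_x\}$ satisfying this identity makes the preceding manipulations reversible, so the kernels $\{\zeta_\Lambda(\sigma\mid\omega)\}_{\Lambda\in\mathcal{N}}$ are consistent. The only non-routine step is the bookkeeping needed to identify the abstract boundary functional $h$ in \eqref{2.6} with the physical field $J\sum_{y\in S(x)}\omega(y)$ on $W_n$ and to check that the Hamiltonian $H_\Lambda^\omega$ of the Ising model is recovered from the potential \eqref{11}; once this dictionary is fixed the rest is algebraic simplification.
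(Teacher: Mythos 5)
Your proposal is correct and follows essentially the same route as the paper: specialize Theorem \ref{THEOREM 2.1.} to $\rho(\sigma,\omega)=\sigma\cdot\omega$ and $h_x=J\sum_{y\in S(x)}\omega(y)$, obtain the product identity, and rewrite each factor via $\theta=\tanh(J\beta)$ to recover $h_x=\sum_{y\in S(x)}f(h_y,\theta)$. In fact you supply more of the algebra (the $2\cosh(J\beta\pm h_y)$ reduction and the verification that each summand equals $f(h_y,\theta)$) than the paper, which simply asserts the rewriting.
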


Each solution to the equation $h_x=\sum_{y \in S(x)} f\left(h_y, \theta\right)$ gives us a splitting Gibbs measure. If we choose $\rho$ as a Kronecker symbol, then the obtained model is called Potts model. Our results hold for Potts model (see detail in \cite{GRHR,Gan, HA,10}).

\section*{Acknowledgements}
The work supported by the fundamental project (number: F-FA-2021-425)  of The Ministry of Innovative Development of the Republic of Uzbekistan.
\section*{Statements and Declarations}

{\bf	Conflict of interest statement:}
The author states that there is no conflict of interest.

\section*{Data availability statements}
The datasets generated during and/or analysed during the current study are available from the corresponding author on reasonable request.

\end{document}